\newtheorem{theorem}{Theorem}
\newtheorem{corollary}{Corollary}
\newtheorem{proposition}{Proposition}
\newtheorem{lemma}{Lemma}
\theoremstyle{definition}
\newtheorem{definition}{Definition}
\newtheorem{example}{Example}
\theoremstyle{remark}
\newtheorem{remark}{Remark}
\def\eqns#1{\begin{equation*}#1\end{equation*}}
\def\eqnsml#1{\begin{multline*}#1\end{multline*}}
\def\eqnl#1#2{\begin{equation}\label{#1}#2\end{equation}}
\def\eqnsa#1{\begin{subequations}\begin{align*}#1\end{align*}\end{subequations}}
\def\eqnla#1#2{\begin{subequations}\label{#1}\begin{align}#2\end{align}\end{subequations}}
\def\one{\mathbf{1}}
\def\calB{\mathcal{B}}
\def\calI{\mathcal{I}}
\def\calG{\mathcal{G}}
\def\bscalI{\bm{\calI}}
\def\bbE{\mathbb{E}}
\def\bbI{\mathbb{I}}
\def\bbN{\mathbb{N}}
\def\bbP{\mathbb{P}}
\def\bbR{\mathbb{R}}
\def\d{\mathrm{d}}
\DeclareMathOperator{\Sym}{Sym}
\def\defeq{\doteq}
\def\given{\mid}
\def\Given{\,\bigg|\,}
\def\ind#1{\one_{#1}}
\begin{document}

\title{On the loss of Fisher information in some multi-object tracking observation models}

\author{J. Houssineau\thanks{Department of Statistics and Applied Probability, National University of Singapore. Email:\href{mailto:stahje@nus.edu.sg}{stahje@nus.edu.sg}},
\quad A. Jasra\thanks{Department of Statistics and Applied Probability, National University of Singapore Email:\href{mailto:staja@nus.edu.sg}{staja@nus.edu.sg}}
\quad and \quad S. S. Singh\thanks{Department of Engineering, University of Cambridge and the Alan Turing Institute. Email:\href{mailto:sss40@cam.ac.uk}{sss40@cam.ac.uk}}%
}

\date{}

\maketitle

\begin{abstract}
The concept of Fisher information can be useful even in cases where the probability distributions of interest are not absolutely continuous with respect to the natural reference measure on the underlying space. Practical examples where this extension is useful are provided in the context of multi-object tracking statistical models. Upon defining the Fisher information without introducing a reference measure, we provide remarkably concise proofs of the loss of Fisher information in some widely used multi-object tracking observation models.
\end{abstract}

\section{Introduction}

The Fisher information is a fundamental concept in Statistics and Information Theory \citep{Rissanen1996}, e.g.\ it features in Jeffreys prior \citep{Jeffreys1946}, the Cram\'er-Rao lower bound \citep{Cramer1946, Rao1992} and in the analysis of the asymptotics of maximum-likelihood estimators \citep{LeCam1986, Douc2004, Douc2011}. Although different generalisations have been proposed, see e.g.\ \citep{Lutwak2005,Lutwak2012}, the standard formulation of the Fisher information often involves a parametric family of probability measures which are all absolutely continuous with respect to a common reference measure in order to define the corresponding probability density functions. This though can be a restrictive assumption for some statistical models.

Let $\Theta \subseteq \bbR$ be a given open set of parameters and let $\{P_{\theta}\}_{\theta \in \Theta}$ be a parametric family of probability measures on a Polish space $E$ equipped with its Borel $\sigma$-algebra $\calB(E)$ and with a reference measure~$\lambda$. Most often, $E$ is a subset of $\bbR^d$ for some $d > 0$ and $\lambda$ is the Lebesgue measure, although Haar measures can be considered more generally for locally-compact topological groups. We will consider the former since the main practical limitation with the usual definition of Fisher information does not come from the lack of natural reference measure but instead from the irregularity of the probability distributions of interest. The usual setting is to assume that for all $\theta \in \Theta$ it holds that $P_{\theta}$ is absolutely continuous with respect to $\lambda$, denoted $P_{\theta} \ll \lambda$. In this case, the probability density function $p_{\theta}$ can be defined as the Radon-Nikodym derivative
\eqns{
p_{\theta} = \dfrac{\d P_{\theta}}{\d \lambda}
}
that is, as the function on $E$ defined uniquely up to a $\lambda$-null set by
\eqns{
P_{\theta}(A) = \int \ind{A}(x) p_{\theta}(x) \lambda(\d x)
}
for all $A \in \calB(E)$. In this situation, assuming that $p_{\theta}$ is differentiable with respect to $\theta$, the \emph{score} is defined as $\frac{\partial}{\partial \theta} \log p_{\theta}(x)$ or indeed ${ \frac{\partial}{\partial \theta} p_{\theta}(x)}/{ p_{\theta}(x) }$. Under the final assumption that the score is square integrable,
the Fisher information \citep{Lehmann1998} is defined as
\eqnl{eq:stdFisher}{
\calI(\theta) = \int \bigg( \frac{ \frac{\partial}{\partial \theta} p_{\theta}(x)}{ p_{\theta}(x) } \bigg)^2 p_{\theta}(x) \lambda(\d x).
}
The objective in this article is twofold.  For some applications, it is necessary to relax the requirement  that $P_{\theta} \ll \lambda$ holds for all $\theta \in \Theta$, or indeed any $\theta$, and an appropriate definition of $\calI(\theta)$ is needed in these cases. Upon addressing this issue, our second objective is then to study the Fisher information of some observation models frequently used in multi-object tracking. Our starting point is the following generalisation of the score ${ \frac{\partial}{\partial \theta} p_{\theta}(x)}/{ p_{\theta}(x) }$ given in \cite{Heidergott2008},
\eqnl{eq:score}{
\dfrac{ \d P'_{\theta}}{\d P_{\theta} }(x),
}
where $P'_{\theta}$ is the (yet to be formally defined) derivative of the probability measure $P_{\theta}$ with respect to $\theta$ and the ratio in \cref{eq:score} is the Radon-Nikodym derivative of $P'_{\theta}$ with respect to $P_{\theta}$. 
\cite{Heidergott2008} introduced this definition of the  score in the context of sensitivity analysis for performance measures of Markov chains \citep{Rubinstein1993}. We define the Fisher information using this expression for the score and then study the loss of information in the context of some statistical estimation problems arising in Engineering (see \cref{sec:practicalExample}.) Indeed, as shown in \cref{res:consistency}, when the family $P_{\theta}$  have differentiable densities with respect to the Lebesgue measure, the Fisher information defined using the score in \cref{eq:score} coincides with  \cref{eq:stdFisher}. 

The first problem studied in \cref{sec:practicalExample:randomPermutation} concerns fitting a parametric model to random vectors which are observed through a sensor that randomly permutes the components of the vector. This problem arises in the context of multi-object tracking \citep{Houssineau2017_identification} where the random vector corresponds to recorded measurements from distinct objects (e.g.\ vehicles) being tracked using a radar. The radar is able to provide (noisy) measurements of the locations of these object but without knowledge of the association of recorded measurements to the objects themselves. Our analysis involves studying a parametric model that does not have a common dominating measure and through the proposed definition of the Fisher information we provide a simple proof that association uncertainty results in a loss of information.  This fact is surprisingly undocumented in the literature despite the numerous articles in Engineering on statistical inference for these types of models.

Multi-object observation models often also include thinning and clutter. Clutter are spurious observations, unrelated to the objects being tracked, generated by radar reflections from non-targets. Thinning is the random deletion of target generated measurements which models the occasional obscuring of targets by obstacles. The augmented set of thinned and spurious observations can be modelled as a spatial point process and \cref{sec:practicalExample:pointProcess} concerns fitting a parametric model to a spatial point process that is observed under thinning and superposition. Like random permutation, thinning and superposition results in a loss of information, which is easily shown using the Fisher information defined via \cref{eq:score} and its associated properties. These properties are invoked in the proofs in \cref{sec:practicalExample} but are formally stated and proven in the final section, \cref{sec:measTheroreticFisher}.

\section{Motivating examples}
\label{sec:practicalExample}

\subsection{Random permutation of a random vector}
\label{sec:practicalExample:randomPermutation}

Consider a parametric probability measure $P_{\theta}$, $\theta \in \Theta \subseteq \bbR$. For each $\theta$, $P_{\theta}$ is the 
law of a random vector $(X_1,\dots,X_n)$ where each $X_i$ are in $\bbR^d$, i.e. $P_{\theta}$ is a probability measure on $(\bbR^{dn},\calB(\bbR^{dn}))$.  Assume $n,d \in \bbN$ are fixed. Let $(X'_1,\dots,X'_n) = (X_{\varsigma(1)},\dots,X_{\varsigma(n)})$, a random permutation of $(X_1,\dots,X_n)$, where $\varsigma$ is a random variable with values in the set $\Sym(n)$ of permutations of $\{1,\dots,n\}$. Throughout this section, $x_{1:n}$ denotes the vector $(x_1,\dots,x_n)$.

In multi-object tracking, each $X_i$ corresponds to a measurement of a distinct object being tracked; there are $n$ of them. The sensor acquiring $(X_1,\dots,X_n)$, e.g.\ a radar, returns the vector but with the association of observations to the $n$ targets lost, which can be modelled as $(X'_1,\dots,X'_n)$. Filtering for such models has spawned an entire family of algorithms. e.g.\ see \cite{Blackman1986,BarShalom1987}.

The following \lcnamecref{res:randomPerturbation} shows that the Fisher information $\bscalI'(\theta)$ of the law of $X'_{1:n}$, i.e.\ after the random permutation, is smaller than the Fisher information $\bscalI(\theta)$ of $P_{\theta}$. The concept of weak-differentiability will be defined formally in the next section.

\begin{theorem}
\label{res:randomPerturbation}
Assume the family $\{P_{\theta}\}_{\theta \in \Theta}$ is weakly-differentiable.  Then any random permutation of $X_{1:n}$ that is independent of $\theta$ incurs a loss of information, that is $\bscalI'(\theta) \leq \bscalI(\theta)$.
\end{theorem}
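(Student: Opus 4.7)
My plan is to realise the random permutation as a $\theta$-independent Markov kernel acting on $P_\theta$, and then derive a measure-theoretic data-processing inequality for Fisher information. The first step is to let $\mu$ denote the law of $\varsigma$ on $\Sym(n)$ and define the Markov kernel $K$ on $(\bbR^{dn}, \calB(\bbR^{dn}))$ by
\[
K(x, A) \;=\; \sum_{\sigma \in \Sym(n)} \mu(\{\sigma\})\, \ind{A}\bigl((x_{\sigma(1)}, \ldots, x_{\sigma(n)})\bigr),
\]
which is independent of $\theta$ by assumption. Denoting by $Q_\theta$ the law of $X'_{1:n}$, we then have $Q_\theta = P_\theta K$, where the notation means $Q_\theta(A) = \int K(x, A)\, P_\theta(\d x)$.

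Next, I would invoke the linearity of the weak-derivative operation (to be developed in \cref{sec:measTheroreticFisher}) to conclude that $(Q_\theta)' = P'_\theta K$, and that $(Q_\theta)' \ll Q_\theta$ whenever $P'_\theta \ll P_\theta$. Writing $s_\theta \defeq \d P'_\theta/\d P_\theta$ for the score of $P_\theta$, testing against any bounded measurable $f$ gives
\[
\int f(y)\,(Q_\theta)'(\d y) \;=\; \bbE_\theta\bigl[ f(X'_{1:n})\, s_\theta(X_{1:n})\bigr],
\]
since $(X_{1:n}, X'_{1:n})$ has joint law $P_\theta(\d x)\,K(x,\d y)$ with second marginal $Q_\theta$. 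This identifies the score of $Q_\theta$ as the conditional expectation
\[
\dfrac{\d (Q_\theta)'}{\d Q_\theta}(y) \;=\; \bbE_\theta\bigl[\,s_\theta(X_{1:n}) \Given X'_{1:n} = y\,\bigr].
\]

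From here, the conclusion follows by applying conditional Jensen's inequality to $u \mapsto u^2$ together with the tower property:
\[
\bscalI'(\theta) \;=\; \bbE_{Q_\theta}\!\Bigl[\bbE_\theta[\,s_\theta(X_{1:n}) \given X'_{1:n}\,]^2\Bigr] \;\leq\; \bbE_{Q_\theta}\!\bigl[\bbE_\theta[\,s_\theta(X_{1:n})^2 \given X'_{1:n}\,]\bigr] \;=\; \bbE_{P_\theta}\!\bigl[s_\theta(X_{1:n})^2\bigr] \;=\; \bscalI(\theta).
\]
The hard part will not be this final chain of inequalities but rather the measure-theoretic bookkeeping in the first two steps: establishing that the weak derivative commutes with $\theta$-independent kernel application, $(P_\theta K)' = P'_\theta K$, and that the absolute continuity $P'_\theta \ll P_\theta$ transfers through $K$ so that the Radon-Nikodym derivative identifying the new score is well-defined. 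Once these linearity and absolute-continuity properties of the framework in \cref{sec:measTheroreticFisher} are available, the classical data-processing style argument above closes the proof.
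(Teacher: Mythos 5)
Your proposal is correct and follows essentially the same route as the paper: you realise the permutation as a $\theta$-independent Markov kernel, identify the score of the permuted observation as the conditional expectation of the original score (this is exactly the paper's Fisher identity, \cref{res:FisherIdentity}, combined with \cref{res:derivativeProduct,res:jointRadonNikodym}), and conclude by Jensen's inequality. The only cosmetic difference is that you work directly with the marginal pushforward $Q_\theta = P_\theta K$ and verify $(P_\theta K)' = P'_\theta K$ by testing against bounded functions, whereas the paper routes through the extended model $(X_{1:n}, X'_{1:n})$ with joint law $P_\theta \times Q$ before marginalising; the required commutation and absolute-continuity facts you flag as ``bookkeeping'' are precisely what \cref{sec:measTheroreticFisher} supplies.
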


\begin{proof}
Let $\pi$ be the probability distribution of $\varsigma$ on $\Sym(n)$, then a version of the conditional law of $X'_{1:n}$ given $X_{1:n}$ is
\eqns{
Q(B_1 \times \dots \times B_n \given X_{1:n}) = \sum_{\sigma \in \Sym(n)} \pi(\sigma) \prod_{i = 1}^n \delta_{X_{\sigma(i)}}(B_i),
}
for any $B_1 \times \dots \times B_n \in \calB(\bbR^{dn})$. The fact that $Q$ does not depend on $\theta$ follows from the independence of the random permutation $\varsigma$ from the parameter. From \cref{res:jointRadonNikodym,res:derivativeProduct} the score corresponding to the extended model $(X_{1:n},X'_{1:n})$ can then be expressed as
\eqnla{eq:proof:randomPerturbation}{
\dfrac{\d (P_{\theta} \times Q)'}{\d P_{\theta} \times Q}(x_{1:n},x'_{1:n}) & = \dfrac{\d P'_{\theta} \times Q}{\d P_{\theta} \times Q}(x_{1:n},x'_{1:n}) \\
& = \dfrac{\d P'_{\theta}}{\d P_{\theta}}(x_{1:n})
}
for all $x_{1:n}$ and all $x'_{1:n}$ in $\bbR^{dn}$. Note that $(X_{1:n},X'_{1:n})$ is not absolutely continuous with respect to the Lebesgue measure on $\bbR^{2dn}$ even when $P_{\theta}$ has a density with respect to the Lebesgue measure. Using the extension of the Fisher identity (see \cref{res:FisherIdentity}), it follows that
\eqnla{eq:proof:randomPerturbation:FisherIdentity}{
\dfrac{\d\hat{P}'_{\theta}}{\d \hat{P}_{\theta}}(X'_{1:n}) & = \bbE_{\theta}\bigg( \dfrac{\d (P_{\theta} \times Q)'}{\d P_{\theta} \times Q}(X_{1:n},X'_{1:n}) \Given X'_{1:n} \bigg)\\
& = \bbE_{\theta}\bigg( \dfrac{\d P'_{\theta}}{\d P_{\theta}}(X_{1:n}) \Given X'_{1:n} \bigg) \quad \text{almost surely},
}
with $\hat{P}_{\theta}$ the marginal law of $X'_{1:n}$. Applying Jensen's inequality to the function $y \mapsto y^2$, we conclude that
\eqns{
\bscalI'(\theta) \defeq \bbE_{\theta}\Bigg( \bigg( \dfrac{\d \hat{P}'_{\theta}}{\d \hat{P}_{\theta}}(X'_{1:n})\bigg)^2 \Bigg) \leq  \bbE_{\theta}\Bigg( \bigg( \dfrac{\d P'_{\theta}}{\d P_{\theta}}(X_{1:n})\bigg)^2 \Bigg) = \bscalI(\theta),
}
which concludes the proof of the \lcnamecref{res:randomPerturbation}.
\end{proof}

\begin{remark}
A different proof of this result has been proposed in \cite{Houssineau2017_identification} using the standard formulation of Fisher information. However the proof presented here is remarkably concise and less tedious thanks to the possibility of defining in \cref{eq:proof:randomPerturbation} the score of the extended parametric model $(X_{1:n},X'_{1:n})$ which does not have a common dominating measure. The final result then follows from the identity in \cref{eq:proof:randomPerturbation:FisherIdentity} and Jensen's inequality. 
\end{remark}
It is not possible to establish a strict information loss in general, e.g.\ if $P_{\theta}$ is symmetrical or if $\theta$ is related to some summary statistics that is not affected by random permutation. Additional assumption that guarantee a strict loss are given in \cite{Houssineau2017_identification}.

\subsection{Thinning and superposition of point processes}
\label{sec:practicalExample:pointProcess}

Spatial point processes are important in numerous applications \citep{Baddeley2006}, e.g.\ Forestry \citep{Stoyan2000} and Epidemiology \citep{Elliot2000}. In addition, point process models are widely used in formulating multi-object tracking problems \citep{Mahler2007} as they  naturally account for an unknown number of objects which are observed indirectly without association and under thinning and superposition.   We adopt the approach of the previous section but now characterise the Fisher information of a family of point process parametrized by $\theta \in \Theta$ observed under thinning and superposition. (Note the loss of Fisher information in the presence of association uncertainty has already been established in \cref{sec:practicalExample:randomPermutation}.) 

Let $\Phi$ denote a point process on $\bbR^d$ with parametrised distribution $P_{\theta}$ on $E = \bigcup_{n \geq 0} \bbR^{dn}$, with $\bbR^0$ denotes an arbitrary isolated point representing the absence of points in the process. A realisation from $P_{\theta}$ is a random vector $(x_1,\ldots,x_n)$ where both the number of points $n$ and their locations $x_i \in \bbR^{d}$ are random. However, point-process distributions on $\bbR^d$ are not always absolutely continuous with respect to the corresponding Lebesgue measure. In particular, the distribution of a non-simple point process, which is a point process such that there is a positive probability of two or more points of its realisation, say $x_i$ and $x_j$ of $(x_1,\ldots,x_n)$,  being identical; see \cite{Schoenberg2006} for a discussion about non-simple point processes and examples, e.g.\ by duplicating the points in a realisation as discussed further below. Assuming that the family $\{P_{\theta}\}_{\theta \in \Theta}$ is weakly-differentiable, the Fisher information $\bscalI_{\Phi}(\theta)$ corresponding to the parametrised distribution of $\Phi$ can then be expressed as
\eqnl{eq:PhiFisher}{
\bscalI_{\Phi}(\theta) = \sum_{n \geq 0} \pi_{\theta}(n) \int \bigg( \dfrac{\d P_{\theta}'}{\d P_{\theta}}(x_1,\dots,x_n) \bigg)^2 P_{\theta}(\d(x_1,\dots,x_n) \given n),
}
where $\pi_{\theta}$ is a probability mass function on $\bbN_0$ characterising the number of points $N$ in $\Phi$ and where $P_{\theta}(\cdot \given n)$ is the conditional distribution of the location of the points in $\Phi$ given that the number of points is $n$ (which is supported by $\bbR^{dn}$). A straightforward example is when $\Phi$ is an independently identically distributed point process. Its distribution factorises as
\eqns{
P_{\theta}(B_1 \times \dots \times B_n) = \pi_{\theta}(n) \prod_{i=1}^n \mu_{\theta}(B_i)
}
for any $B_1,\dots,B_n \in \calB(\bbR^d)$ and any $n \in \bbN_0$, where $\mu_{\theta}$ is a probability measure on $\bbR^d$. Using the product rule of \cref{res:derivativeProduct} the expression of the Fisher information simplifies in the independently identically distributed case to
\begin{align}\label{eq:iidppFisher}
\bscalI_{\Phi}(\theta) & = \calI_N(\theta) + \sum_{n \geq 0} n^2 \pi_{\theta}(n) \int \bigg( \dfrac{\d \mu_{\theta}'}{\d \mu_{\theta}}(x) \bigg)^2 \mu_{\theta}(\d x) \nonumber \\
& = \calI_N(\theta) + \bbE(N^2) \bscalI_X(\theta)
\end{align}
where $X$ is a random variables with distribution $\mu_{\theta}$.

\begin{example}
A trivial construction of a non-simple point process can be obtained from an independently identically distributed point process $\Phi$ by duplicating its realisation. The resulting point process, denoted $\Phi_2$, has each point of $\Phi$ present twice. The Fisher information of $\Phi_2$ can be expressed with the proposed formulation in spite of the lack of absolute continuity with respect to to the reference measure on~$E$. Indeed, the law $P^+_{\theta}$ of the point process $\Phi_2$ is
\eqns{
P^+_{\theta}(B_1\times\dots\times B_{2n}) = \pi_{\theta}(n) \sum_{\sigma \in \Sym(2n)}\prod_{i=1}^n \bar\mu_{\theta}(B_{\sigma(2i-1)} \times B_{\sigma(2i)})
}
and $P^+_{\theta}(\bbR^{d(2n+1)}) = 0$, where $\bar\mu_{\theta}$ a probability measure supported by the diagonal of $\bbR^d \times \bbR^d$ such that $\bar\mu_{\theta}(B \times B') = \mu_{\theta}(B \cap B')$ for any $B,B' \in \calB(\bbR^d)$. One can verify that $\bar\mu_{\theta}'(B \times B') = \mu_{\theta}'(B \cap B')$ so that
\eqns{
\dfrac{\d\bar\mu_{\theta}'}{\d\bar\mu_{\theta}}(x,x') =
\begin{cases*}
\dfrac{\d \mu_{\theta}'}{\d \mu_{\theta}}(x) & if $x = x'$ \\
0 & otherwise,
\end{cases*}
}
from which it follows that $\bscalI_{\Phi_2}(\theta) = \bscalI_{\Phi}(\theta)$, that is, duplicating each point in the point process $\Phi$ does not change the Fisher information. In the context of parameter inference, this is in agreement with the natural approach of removing the duplicate points before estimating $\theta$.
\end{example}

Returning now to a general point process $\Phi$ which is not necessarily independently identically distributed. For each $\alpha \in [0,1]$, let $\Phi_{\alpha}$ denote the thinned version of~$\Phi$ where each point of its realisation is retained independently of the other points with probability $\alpha$. In multi-object tracking, an independently thinned point processes arises because a radar can fail to return a credible observation for an object in its surveillance region. 

\begin{theorem}
\label{res:thinning}
Let $\Phi$ be a point process characterised by a weakly-differentiable family of probability distributions parametrised by $\Theta$, then $\bscalI_{\Phi}(\theta) \geq \bscalI_{\Phi_{\alpha}}(\theta)$ holds for any $\alpha \in [0,1]$. If $\bscalI_{\Phi}(\theta)>0$ then the inequality is strict when $\alpha<1$.
\end{theorem}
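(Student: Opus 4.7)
The plan is to mirror the proof of \cref{res:randomPerturbation} by viewing independent $\alpha$-thinning as a randomisation kernel independent of $\theta$. Let $Q(\cdot \given \phi)$ denote the thinning kernel on $E$: given $\phi = (x_1,\dots,x_n)$, $Q$ returns the random subset obtained by retaining each $x_i$ independently with probability $\alpha$, so that $(\Phi,\Phi_\alpha)$ has joint law $P_\theta \times Q$. Since $Q$ does not depend on $\theta$, \cref{res:jointRadonNikodym,res:derivativeProduct} yield
\eqns{
\dfrac{\d (P_\theta \times Q)'}{\d (P_\theta \times Q)}(\phi,\phi_\alpha) = \dfrac{\d P'_\theta}{\d P_\theta}(\phi),
}
so that the joint score of $(\Phi,\Phi_\alpha)$ coincides with the score of $\Phi$ alone.

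Writing $\hat P_\theta$ for the marginal law of $\Phi_\alpha$, the extended Fisher identity (\cref{res:FisherIdentity}) then gives $\d\hat P'_\theta/\d \hat P_\theta(\Phi_\alpha) = \bbE_\theta[\d P'_\theta/\d P_\theta(\Phi) \given \Phi_\alpha]$ almost surely. Applying Jensen's inequality to $y \mapsto y^2$ yields $\bscalI_{\Phi_\alpha}(\theta) \le \bscalI_\Phi(\theta)$, with the gap equal to $\bbE_\theta[\mathrm{Var}(\d P'_\theta/\d P_\theta(\Phi) \given \Phi_\alpha)]$, which is the non-strict claim.

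For the strict inequality when $\alpha < 1$ and $\bscalI_\Phi(\theta) > 0$, it suffices to rule out that the score is $\sigma(\Phi_\alpha)$-measurable almost surely. The key observation is that $\bbP(\Phi_\alpha = \emptyset \given \Phi = \phi) = (1-\alpha)^{|\phi|} > 0$ for every finite $\phi \in E$, so the conditional law of $\Phi$ given $\{\Phi_\alpha = \emptyset\}$ has the same support as $P_\theta$. If $\d P'_\theta/\d P_\theta(\Phi) = g(\Phi_\alpha)$ almost surely for some measurable $g$, then the score must equal the constant $g(\emptyset)$ on the positive-probability event $\{\Phi_\alpha = \emptyset\}$, which in turn forces $\d P'_\theta/\d P_\theta = g(\emptyset)$ to hold $P_\theta$-almost everywhere. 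Since $P_\theta(E) = 1$ for every $\theta$ implies the score has mean zero, this constant must vanish, giving $\bscalI_\Phi(\theta) = 0$ and contradicting the hypothesis.

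The main obstacle is the last step: rigorously passing from ``score equal to a constant on $\{\Phi_\alpha = \emptyset\}$'' to ``$\d P'_\theta/\d P_\theta$ constant $P_\theta$-almost everywhere'' requires handling the conditional distribution of $\Phi$ given $\Phi_\alpha$ via a regular version, and exploiting the positivity of $(1-\alpha)^{|\phi|}$ uniformly over the support of $P_\theta$. The non-strict inequality, by contrast, follows the blueprint of the random-permutation case essentially verbatim once the thinning kernel is identified as the appropriate $\theta$-independent randomisation.
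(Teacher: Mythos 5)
Your proposal is correct and follows essentially the same route as the paper: thinning is viewed as a $\theta$-independent Markov kernel, \cref{res:jointRadonNikodym,res:derivativeProduct} together with the Fisher identity and Jensen's inequality give the non-strict inequality, and strictness is obtained from the fact that the event $\{\Phi_\alpha=\emptyset\}$ has conditional probability $(1-\alpha)^{|\Phi|}>0$ given any realisation of $\Phi$, which forces a $\sigma(\Phi_\alpha)$-measurable score to be almost surely constant and hence (by its zero mean) to vanish, contradicting $\bscalI_{\Phi}(\theta)>0$. The step you flag as an obstacle does not in fact require a regular conditional distribution: it follows from the tower property via $\bbE\big(\bbI_{A}(S_{\theta}(\Phi))\,\bbI_{\bbR^{0}}(\Phi_{\alpha})\big)=\bbE\big(\bbI_{A}(S_{\theta}(\Phi))(1-\alpha)^{|\Phi|}\big)$, which is exactly the computation the paper's appendix uses.
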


\begin{proof}
The probability distribution $Q_{\alpha}$ of the thinned point process $\Phi_{\alpha}$ given $\Phi$ can be expressed as
\eqns{
Q_{\alpha}(B_1 \times \dots \times B_k \given x_1,\dots,x_n) =
\sum_{I \subseteq \{1,\dots,n\} : |I| = k} \alpha^k(1-\alpha)^{n-k} \prod_{i \in I} \delta_{x_i}  \big(B_{s(i)}\big)
}
for any $B_1,\dots,B_k \in \calB(\bbR^d)$, any $x_1,\dots,x_n \in \bbR^d$ and any integers $n,k$ such that $k \leq n$, with $s(i) = |\{1,\dots,i\} \cap I|$ so that $i$ is the $s(i)$\textsuperscript{th} element of~$I$. We obtain from the Fisher identity that the score associated with the point process $\Phi_{\alpha}$ with law $P_{\theta,\alpha}$ verifies
\eqnsa{
\dfrac{\d P'_{\theta,\alpha}}{\d P_{\theta,\alpha}}(x_1,\dots,x_k) & =  \bbE\bigg( \dfrac{\d P'_{\theta} \times Q_{\alpha}}{\d P_{\theta} \times Q_{\alpha}}(\Phi,\Phi_{\alpha}) \Given \Phi_{\alpha} = (x_1,\dots,x_k)  \bigg) \\
& = \bbE\bigg( \dfrac{\d P'_{\theta}}{\d P_{\theta}}(\Phi) \Given \Phi_{\alpha} = (x_1,\dots,x_k)  \bigg),
}
where the use of $\Phi$ as an argument of point-process distributions is possible because of the irrelevance of the points' ordering. The proof of $\bscalI_{\Phi}(\theta) \geq \bscalI_{\Phi_{\alpha}}(\theta)$ can now be concluded using the decomposition in \eqref{eq:PhiFisher} and invoking Jensen's inequality as in \cref{res:randomPerturbation}. The proof of the strict inequality is deferred to the Appendix.
\end{proof}

The decrease of the Fisher information demonstrated in \cref{res:thinning} can be quantified in the special case of an independently identically distributed point process as follows.

\begin{proposition}
\label{res:iidThinning}
Let $\Phi$ be an independently identically distributed point process characterised by a weakly-differentiable family of probability distributions parametrised by $\theta \in \Theta$ and assume its cardinality distribution $\pi_{\theta}= \{\pi_{\theta}(n): n \in \bbN_0 \}$ does not depend on $\theta$, then
\eqns{
\big( \bscalI_{\Phi_{\alpha}}(\theta) - \bscalI_{\Phi_{\alpha'}}(\theta) \big) /\bscalI_X(\theta) =
\big((\alpha-\alpha') - (\alpha^2 - \alpha'^2)) \bbE(N) + (\alpha^2 - \alpha'^2) \bbE(N^2) \geq 0
}
for any $0 \leq \alpha' \leq \alpha \leq 1$.
\end{proposition}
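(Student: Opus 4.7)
My plan is to reduce the proposition to a direct computation via the i.i.d.\ Fisher-information identity \eqref{eq:iidppFisher}, applied to both $\Phi_\alpha$ and $\Phi_{\alpha'}$. The first step is to observe that independent $\alpha$-thinning preserves the i.i.d.\ structure: conditionally on there being $n$ latent points drawn i.i.d.\ from $\mu_\theta$, each survives independently with probability $\alpha$, so conditionally on the surviving cardinality $N_\alpha = k$ the kept locations are themselves $k$ i.i.d.\ draws from $\mu_\theta$. Hence $\Phi_\alpha$ is itself an i.i.d.\ point process with per-point distribution $\mu_\theta$ and some cardinality mass function $\pi_{\theta,\alpha}$. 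Because $\pi_\theta$ is $\theta$-free by hypothesis and the thinning mechanism is $\theta$-independent, $\pi_{\theta,\alpha}$ is also $\theta$-free, so the cardinality Fisher term in \eqref{eq:iidppFisher} vanishes, giving $\bscalI_{\Phi_\alpha}(\theta) = \bbE(N_\alpha^2)\bscalI_X(\theta)$, and likewise with $\alpha'$ in place of $\alpha$.

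The second step is a direct computation of $\bbE(N_\alpha^2)$ by conditioning on $N$. Since $N_\alpha\given N$ is $\mathrm{Binomial}(N,\alpha)$, we have $\bbE(N_\alpha^2\given N) = N\alpha(1-\alpha) + N^2\alpha^2$, whence $\bbE(N_\alpha^2) = \alpha(1-\alpha)\bbE(N) + \alpha^2\bbE(N^2)$. Subtracting the corresponding expression for $\alpha'$ and dividing by $\bscalI_X(\theta)$ produces exactly the stated equality, since the coefficient of $\bbE(N)$ becomes $\alpha(1-\alpha)-\alpha'(1-\alpha') = (\alpha-\alpha') - (\alpha^2-\alpha'^2)$, while the coefficient of $\bbE(N^2)$ is already $\alpha^2-\alpha'^2$.

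For the non-negativity claim, my plan is to factor $(\alpha-\alpha')\geq 0$ out of the right-hand side using $\alpha^2-\alpha'^2=(\alpha-\alpha')(\alpha+\alpha')$, rewriting the expression as
\eqns{
(\alpha-\alpha')\big\{(1-\alpha-\alpha')\bbE(N) + (\alpha+\alpha')\bbE(N^2)\big\} = (\alpha-\alpha')\big\{\bbE(N) + (\alpha+\alpha')\bbE(N(N-1))\big\},
}
which is manifestly a product of two non-negative quantities because $N$ takes values in $\bbN_0$ and therefore $\bbE(N(N-1))\geq 0$. I do not anticipate any substantive obstacle: the entire proposition is essentially a variance-decomposition bookkeeping once the two structural observations are in place, namely that independent thinning preserves the i.i.d.\ form and that the $\theta$-freeness of $\pi_\theta$ kills the cardinality contribution to the Fisher information.
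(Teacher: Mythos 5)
Your proof is correct and follows essentially the same route as the paper: both reduce the claim via \eqref{eq:iidppFisher} to comparing $\bbE(N_{\alpha}^2)$ with $\bbE(N_{\alpha'}^2)$ (after noting that thinning preserves the i.i.d.\ structure, leaves $\mu_{\theta}$ unchanged, and keeps the cardinality distribution $\theta$-free so that $\calI_N(\theta)=0$), and both obtain $\bbE(N_{\alpha}^2)=\alpha(1-\alpha)\bbE(N)+\alpha^2\bbE(N^2)$ from the Binomial law of $N_{\alpha}$ given $N$. Your explicit factorization $(\alpha-\alpha')\{\bbE(N)+(\alpha+\alpha')\bbE(N(N-1))\}$ is a slightly more complete justification of the final non-negativity than the paper's ``from which the result follows,'' but it is a cosmetic rather than substantive difference.
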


\begin{proof}
The parameter $\theta$ of the distribution $\pi_{\theta}$ is omitted in this proof as a consequence of the assumption of independence. Additionally, thinning does not affect the common distribution of the points in $\Phi$ so that, from 
\eqref{eq:iidppFisher}, both point processes have $\mathcal{I}_N(\theta)=0$ and their $\bscalI_{X}(\theta)$ terms are equal. Thus, denoting $N_{\alpha}$ the random number of points in $\Phi_{\alpha}$, the objective is to show that $\bbE(N_{\alpha}^2)$ is greater than $\bbE(N_{\alpha'}^2)$. It holds that the distribution $\pi_{\alpha}$ of $N_{\alpha}$ verifies
\eqns{
\pi_{\alpha}(n) = \sum_{k \geq n} \pi(k) {k\choose n} \alpha^n (1 - \alpha)^{k - n},
}
for any $n \geq 0$, so that
\eqns{
\bbE(N_{\alpha}^2) = \sum_{k \geq 0} \pi(k) \sum_{n = 0}^k n^2  {k\choose n} \alpha^n (1 - \alpha)^{k - n}.
}
The second sum in the right hand side can be recognised to be the second moment of Bernoulli random variable so that
\eqnsa{
\bbE(N_{\alpha}^2) & = \sum_{k \geq n} \pi(k) k \alpha ((k-1)\alpha + 1) \\
& = (\alpha - \alpha^2) \bbE(N) + \alpha^2 \bbE(N^2),
}
from which the result follows.
\end{proof}

\Cref{res:iidThinning} sheds light on the source of the information loss when applying independent thinning to a point process: the quantity $( \bscalI_{\Phi_{\alpha}}(\theta) - \bscalI_{\Phi_{\alpha'}}(\theta) ) /\bscalI_X(\theta)$, which can be seen as a relative loss of Fisher information, is shown to be related to the first and second moments of the random variable associated with the number of points in the process. This is because the operation of thinning applied to the considered type of independently identically distributed point process incurs a loss of information only through the decrease of the number of points.

The focus is now on how information evolves when the points of $\Phi$ are augmented with that of another point process which has a distribution not depending on $\theta$. In the context of multi-object observation models, the point process being augmented to $\Phi$ are spurious observations called clutter which is unrelated to the objects being tracked, e.g.\ generated by radar reflections from non-targets. This, combined with the fact that the number of clutter points received is \emph{a priori} unknown, shows that treating clutter as a $\theta$-independent point process is appropriate. 
Superposition is less straightforward than thinning since the resulting augmented point process will have an altered spatial distribution and cardinality distribution. However, the operation of superposition can be expressed as a Markov kernel that transforms $\Phi$ to a new point process $\Phi'$ and this Markov kernel is independent of $\theta$. Thus the same approach as in \cref{res:thinning} can be applied to show that superposition (in general) also leads to a loss of Fisher information. In the following \lcnamecref{res:superposition}, $\Phi + \tilde\Phi$ stands for the point process resulting from the superposition of $\Phi$ with another point process $\tilde\Phi$.

\begin{proposition}
\label{res:superposition}
Let $\Phi$ be a point process characterised by a weakly-differentiable family of probability distributions parametrised by $\Theta$ and let $\tilde\Phi$ be another point process whose conditional distribution given $\Phi$ does not depend on $\theta$. Then $\bscalI_{\Phi}(\theta) \geq \bscalI_{\Phi + \tilde\Phi}(\theta)$.
\end{proposition}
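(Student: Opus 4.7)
The plan is to follow the same template that produced \cref{res:randomPerturbation} and \cref{res:thinning}: exhibit superposition as a $\theta$-independent Markov kernel acting on $\Phi$, invoke the product rule to collapse the score of the joint model, and then close with the Fisher identity and Jensen's inequality. The only genuinely new ingredient compared with thinning is the construction of the kernel, since the resulting point process has an inflated cardinality and altered spatial distribution.

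First I would build the Markov kernel. Since, by hypothesis, the conditional distribution of $\tilde\Phi$ given $\Phi$ does not depend on $\theta$, call it $\tilde Q(\cdot \given \Phi)$. The superposition map sending $(\Phi,\tilde\Phi)$ to the point process $\Phi + \tilde\Phi$ on $E = \bigcup_{n\geq 0} \bbR^{dn}$ is a deterministic, $\theta$-independent function (concatenate the vectors, modulo ordering, placing an $(n+m)$-point configuration in $\bbR^{d(n+m)}$). Composing with $\tilde Q$ yields a Markov kernel $Q(\cdot \given \Phi)$ describing the conditional law of $\Phi + \tilde\Phi$ given $\Phi$, and this kernel inherits the $\theta$-independence of $\tilde Q$.

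Second, with the kernel in hand, \cref{res:jointRadonNikodym} and the product rule of \cref{res:derivativeProduct} applied to the joint law $P_\theta \times Q$ on $(\Phi, \Phi+\tilde\Phi)$ give
\eqns{
\dfrac{\d (P_\theta \times Q)'}{\d (P_\theta \times Q)}(\varphi, \psi) = \dfrac{\d P'_\theta}{\d P_\theta}(\varphi),
}
because the derivative of $Q$ with respect to $\theta$ vanishes. Denote by $\bar P_\theta$ the marginal law of $\Phi + \tilde\Phi$. The extension of the Fisher identity (\cref{res:FisherIdentity}) then yields
\eqns{
\dfrac{\d \bar P'_\theta}{\d \bar P_\theta}(\Phi + \tilde\Phi) = \bbE_\theta\bigg( \dfrac{\d P'_\theta}{\d P_\theta}(\Phi) \Given \Phi + \tilde\Phi \bigg) \quad \text{almost surely}.
}
Squaring, taking expectations under $P_\theta$, and applying the conditional Jensen inequality to $y \mapsto y^2$ exactly as in the proof of \cref{res:randomPerturbation} gives $\bscalI_{\Phi + \tilde\Phi}(\theta) \leq \bscalI_\Phi(\theta)$.

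The main obstacle is conceptual rather than computational: one must be comfortable treating superposition as a bona fide $\theta$-independent kernel on the disjoint-union state space $E$, noting that the image measure does not in general admit a density with respect to any natural reference measure (cardinalities mix, and duplicates can appear as in the non-simple point-process example). It is precisely here that the measure-theoretic formulation of the score developed in \cref{sec:measTheroreticFisher} does the heavy lifting, since neither side of the kernel needs to admit a Lebesgue density for the above chain of identities to apply.
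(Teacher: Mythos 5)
Your proposal is correct and follows essentially the same route as the paper: identify the conditional law of $\Phi + \tilde\Phi$ given $\Phi$ as a $\theta$-independent Markov kernel, then reuse the product-rule/Fisher-identity/Jensen template from \cref{res:randomPerturbation} and \cref{res:thinning}. The only difference is presentational --- the paper writes the superposition kernel out explicitly as a symmetrised sum over $\Sym(n)$, whereas you construct it abstractly as the pushforward of $\tilde P(\cdot \given \Phi)$ under the deterministic concatenation map; both are adequate.
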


\begin{proof}
Let $\tilde{P}( \cdot \given \Phi)$ be the conditional law of $\tilde\Phi$ given $\Phi$, then the law of the point process $\Phi + \tilde\Phi$ given a realisation $(x_1,\dots,x_k)$ of $\Phi$ is
\eqnsml{
Q(B_1 \times \dots \times B_n \given x_1\dots,x_k) = \\
\dfrac{1}{n!} \sum_{\sigma \in \Sym(n)} \ind{B_{\sigma(1)}\times\dots\times B_{\sigma(k)}}(x_1,\dots,x_k)
\tilde{P}(B_{\sigma(k+1)}\times\dots\times B_{\sigma(n)} \given x_1,\dots,x_k)
}
for any $B_1,\dots,B_n \in \calB(\bbR^d)$. The desired can be now established by proceeding as in the proof of \cref{res:thinning}; details are omitted.
\end{proof}

\section{Fisher information via the weak derivative}
\label{sec:measTheroreticFisher}

To start with, the derivative $P'_{\theta}$ has to be be defined formally. For this purpose, we consider the following weak form of measure-valued differentiation \citep{Pflug1992}, where the notation $\mu(f)$ is used to denote the integral $\int f(x) \mu(\d x)$. Henceforth, the set $E$ will be assumed to be Polish with $\calB(E)$ its Borel $\sigma$-algebra.

\begin{definition}
\label{def:derivMeasure}
Let $\{\mu_{\theta}\}_{\theta \in \Theta}$ be a parametric family of finite measures on $(E,\calB(E))$, then $\theta \to \mu_{\theta}$ is said to be \emph{weakly differentiable} at $\theta \in \Theta$ if there exists a signed finite measure $\mu'_{\theta}$ on $(E,\calB(E))$ such that
\eqns{
\lim_{\epsilon \to 0} \dfrac{1}{\epsilon}\big( \mu_{\theta+\epsilon}(f) - \mu_{\theta}(f) \big) = \mu'_{\theta}(f)
}
holds for all bounded continuous functions $f$ on $E$.
\end{definition}

Although the signed measure $\mu'_{\theta}$ is only characterised by the mass is gives to bounded continuous functions, one can show that this characterisation is sufficient to define $\mu'_{\theta}$ on the whole Borel $\sigma$-algebra $\calB(E)$, see \cref{res:uniqueMeasure} in the Appendix.

Assuming that $\theta \mapsto P_{\theta}$ has a derivative at $\theta \in \Theta$, that $P_{\theta}'$ is absolutely continuous with respect to $P_{\theta}$, and that the square of the score is integrable, the Fisher information is defined to be
\eqns{
\bscalI(\theta) = \int \bigg(\dfrac{ \d P'_{\theta}}{\d P_{\theta} }(x)\bigg)^2 P_{\theta}(\d x).
}

Simple cases where this more versatile definition of Fisher information is useful can be given using Dirac measures on the real line as in the following examples.

\begin{example}
\label{ex:staticDiscrete}
Consider $\Theta = [0,1]$, $P_{\theta} = \theta \delta_{-x} + (1-\theta) \delta_x$ for some given $x \in E = \bbR$. Indeed, in this case, $P_{\theta}$ is not absolutely continuous with respect to the natural reference measure on the real line, the Lebesgue measure $\lambda$. However, 
\eqns{
P'_{\theta} = \delta_{-x} - \delta_x,
}
which is a signed measure and 
\eqns{
\dfrac{ \d P'_{\theta}}{\d P_{\theta} } = \dfrac{1}{\theta}\ind{\{-x\}} - \dfrac{1}{1-\theta}\ind{\{x\}},
}
where the Radon-Nikodym derivative is assumed without loss of generality to be equal to $0$ everywhere it is not uniquely defined. It follows from basic calculations that
\eqns{
\bscalI(\theta) = \dfrac{1}{\theta(1-\theta)}.
}
\end{example}

This unsurprisingly  is the Fisher information of a Bernoulli experiment with probability of success equal to $\theta$. \Cref{ex:staticDiscrete} is meant to be an illustrative calculation executing the definition of $\bscalI(\theta)$: indeed the same result can be recovered by simply restricting the domain of definition of $P_{\theta}$ to the set $\{-x,x\}$ for all $\theta \in \Theta$. The following result illustrates a usual setting one would expect both definitions of the Fisher information to coincide.

\begin{proposition}
\label{res:consistency}
For some dominating measure $\lambda$, assume $P_{\theta} \ll \lambda$ for all $\theta \in \Theta$ and let $p_{\theta}$ denote its density. For each $x$, assume $p_{\theta}(x)$ is differentiable w.r.t.\ $\theta$ and 
\eqnl{eq:consistency}{
 \Big| \dfrac{\partial}{\partial \theta} p_{\theta}(x) \Big| \leq g(x)
}
for all $\theta \in \Theta$ and $\lambda$-almost all $x \in E$ where $g$ is some integrable function on $E$.  Then $\bscalI(\theta) = \calI(\theta)$.
\end{proposition}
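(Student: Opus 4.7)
The plan is to identify $P'_{\theta}$ as the signed measure whose Radon-Nikodym derivative with respect to $\lambda$ is $\partial_{\theta} p_{\theta}$, and then to read off the score and recover \eqref{eq:stdFisher}. Concretely, I would define the candidate signed measure $\nu_{\theta}(A) \defeq \int_A \partial_{\theta} p_{\theta}(x) \lambda(\d x)$; the bound \eqref{eq:consistency} and $\lambda$-integrability of $g$ ensure that $\nu_{\theta}$ is a well-defined finite signed measure on $(E,\calB(E))$.

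The first step is to verify that $\nu_{\theta} = P'_{\theta}$ in the sense of \cref{def:derivMeasure}. Fix a bounded continuous $f$. By the mean-value theorem applied to $\theta' \mapsto p_{\theta'}(x)$ together with \eqref{eq:consistency}, the difference quotient $\epsilon^{-1}(p_{\theta+\epsilon}(x)-p_{\theta}(x))f(x)$ is dominated in absolute value by the $\lambda$-integrable function $\lVert f \rVert_{\infty} g(x)$, while converging pointwise to $f(x)\partial_{\theta} p_{\theta}(x)$ by hypothesis. Dominated convergence yields $\lim_{\epsilon \to 0} \epsilon^{-1}(P_{\theta+\epsilon}(f)-P_{\theta}(f)) = \nu_{\theta}(f)$, and the uniqueness result \cref{res:uniqueMeasure} invoked in the Appendix identifies $P'_{\theta}$ with $\nu_{\theta}$ on all of $\calB(E)$.

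The second step is to show $P'_{\theta} \ll P_{\theta}$ and extract the classical score. Working with a nonnegative version of $p_{\theta}$, if $P_{\theta}(A)=0$ then $p_{\theta} = 0$ $\lambda$-almost everywhere on $A$; at any such $x$, the map $\theta' \mapsto p_{\theta'}(x) \geq 0$ attains its minimum at the interior point $\theta \in \Theta$, so pointwise differentiability forces $\partial_{\theta} p_{\theta}(x) = 0$. Hence $\nu_{\theta}(A)=0$, and taking the version of the Radon-Nikodym derivative that vanishes on $\{p_{\theta}=0\}$ yields $\d P'_{\theta}/\d P_{\theta}(x) = \partial_{\theta} p_{\theta}(x)/p_{\theta}(x)$ on $\{p_{\theta}>0\}$. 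Substituting into the definition of $\bscalI(\theta)$ and using $P_{\theta}(\d x) = p_{\theta}(x) \lambda(\d x)$, together with the fact that the set $\{p_{\theta}=0\}$ contributes zero to both integrals, reproduces \eqref{eq:stdFisher}.

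The main obstacle I anticipate is the second step, specifically the vanishing of $\partial_{\theta} p_{\theta}$ on $\{p_{\theta}=0\}$: this relies on the openness of $\Theta$, the choice of a nonnegative representative of each density, and the pointwise differentiability hypothesis, all three combining to force $\theta$ to be an interior minimum at which the derivative must be zero. The first step is essentially a standard differentiation-under-the-integral-sign argument, and the final identification of the two integrals is then routine.
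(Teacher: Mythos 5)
Your proposal is correct and follows essentially the same route as the paper: identify $\frac{\partial}{\partial \theta} p_{\theta}$ as the $\lambda$-density of $P'_{\theta}$ by differentiating under the integral sign (justified by \cref{eq:consistency} and dominated convergence), then rewrite $\bscalI(\theta)$ as the classical integral \cref{eq:stdFisher}. Your second step --- establishing $P'_{\theta} \ll P_{\theta}$ via the interior-minimum argument on $\{p_{\theta} = 0\}$ --- is extra rigor the paper's proof leaves implicit (absolute continuity of $P'_{\theta}$ with respect to $P_{\theta}$ is assumed there as part of the definition of $\bscalI$), but it does not alter the substance of the argument.
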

\begin{remark}
The assumption of \cref{eq:consistency} is often invoked in the analysis of maximum likelihood estimation \citep{Douc2004, Dean2014} to interchange the order of integration and differentiation, and thus not unique to us. An alternative to assumption in \cref{eq:consistency} is to assume that the mapping $\theta\rightarrow \int\left|\dfrac{\partial}{\partial \theta}{p}_{\theta}(x)\right|\lambda(\d x)<\infty$
is a continuous function of $\theta$. This will  imply 
\begin{equation}
\lim_{\epsilon\rightarrow0}\int\left|\frac{p_{\theta+\epsilon}-p_{\theta}}{\epsilon}
-\dfrac{\partial}{\partial \theta} p_{\theta}\right|\lambda(\d x)=0 \label{eq:consistency2}
\end{equation}
 and thus preserving the conclusion of \cref{res:consistency}. The proof of \cref{eq:consistency2} follows similarly to that of \cite[lemma 7.6]{VanDerVaart1998}.
\end{remark}
 
\begin{proof}
Recalling that the probability density function $p_{\theta}$ of $P_{\theta}$ with respect to $\lambda$ is defined as
\eqns{
P_{\theta}(A) = \int \ind{A}(x) p_{\theta}(x) \lambda(\d x)
}
for all $A \in \calB(E)$, it follows from Leibniz's rule that
\eqnsa{
P'_{\theta}(f) & = \lim_{\epsilon \to \infty} \dfrac{1}{\epsilon} \int f(x) \big(p_{\theta + \epsilon}(x) - p_{\theta}(x)\big) \lambda(\d x), \\
& = \int f(x) \dfrac{\partial}{\partial \theta} p_{\theta}(x) \lambda(\d x),
}
for any bounded continuous mappings $f$ on $E$, and we conclude that $\frac{\partial}{\partial \theta} p_{\theta}$ is the Radon-Nikodym derivative of $P'_{\theta}$ with respect to $\lambda$. Rewriting the Fisher information $\bscalI(\theta)$ as
\eqns{
\bscalI(\theta) = \int \Bigg(\dfrac{ \frac{\d P'_{\theta}}{\d\lambda}(x)}{p_{\theta} (x)}\Bigg)^2 P_{\theta}(\d x) = \int \bigg(\dfrac{ \frac{\partial}{\partial \theta}p_{\theta}(x)}{p_{\theta}(x)}\bigg)^2 p_{\theta}(x) \lambda(\d x)  = \calI(\theta).
}
concludes the proof of the \lcnamecref{res:consistency}.
\end{proof}

The proposed expression of Fisher information can be easily extended to cases where the parameter $\theta$ is vector-valued: each component of the Fisher information matrix can be simply defined based on the partial version of the weak differentiation introduced in \cref{def:derivMeasure}.

Another Polish space $F$ is now considered in order to study the Fisher information for probability measures on product spaces. A function $Q$ on $E \times \calB(F)$ is said to be a \emph{signed kernel} from $E$ to $F$ if $Q(x,\cdot)$ is a signed finite measure for all $x \in E$ and if $Q(\cdot,B)$ is measurable for all $B \in \calB(F)$ (with $\bbR$ equipped with the Borel $\sigma$-algebra, which will be considered by default). If, in particular, $Q(x,\cdot)$ is a probability measure for all $x\in E$ then $Q$ is said to be a \emph{Markov kernel}. If $P$ is a probability measure on $E$ then we denote by $P\times Q$ the probability measure on $(E\times F, \calB(E) \otimes \calB(F))$ characterised by $P\times Q(A\times B) = \int \ind{A}(x) Q(x,B) P(\d x)$ for all $A\times B$ in the product $\sigma$-algebra $\calB(E) \otimes \calB(F)$. A family $\{Q_{\theta}\}_{\theta \in \Theta}$ of Markov kernels from $E$ to $F$ is said to be weakly-differentiable if the measure $Q_{\theta}(x,\cdot)$ is weakly-differentiable for all $x \in E$ and for all $\theta \in \Theta$; it is additionally said to be \emph{bounded weakly-differentiable} if
\eqns{
\sup_g \bigg| \int g(y) Q'_{\theta}(x,\d y) \bigg| < \infty,
}
where the supremum is taken over all bounded continuous functions. If the latter condition is satisfied, then $Q'_{\theta}$ is itself a signed kernel (see \cite[theorem~1]{Heidergott2008_derivatives}). Some technical results are first required.

A formal approach to the weak differentiability of product measures has been considered in \cite{Heidergott2010} and we consider here an easily-proved corollary of \cite[theorem~6.1]{Heidergott2010}.

\begin{corollary}
\label{res:derivativeProduct}
Let $\{P_{\theta}\}_{\theta\in\Theta}$ be a weakly-differentiable parametric family of probability measures on $E$ and let $\{Q_{\theta}\}_{\theta \in \Theta}$ be a bounded weakly-differentiable parametric family of Markov kernels from $E$ to $F$, then
\eqns{
(P_{\theta} \times Q_{\theta})' = P'_{\theta} \times Q_{\theta} + P_{\theta} \times Q'_{\theta}.
}
\end{corollary}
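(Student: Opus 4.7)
The plan is to reduce the claim to the product rule of theorem~6.1 in \cite{Heidergott2010}, cited immediately before the statement, by translating our hypotheses into those of the cited theorem. The underlying calculation, for any bounded continuous $h \colon E\times F \to \bbR$, is standard: start from the telescoping identity
\begin{equation*}
P_{\theta+\epsilon}\times Q_{\theta+\epsilon} - P_\theta\times Q_\theta = P_{\theta+\epsilon}\times(Q_{\theta+\epsilon} - Q_\theta) + (P_{\theta+\epsilon} - P_\theta)\times Q_\theta,
\end{equation*}
divide by $\epsilon$, apply to $h$, and take $\epsilon \to 0$.

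The second summand contributes $(P'_\theta\times Q_\theta)(h)$: setting $g(x) = \int h(x,y)\, Q_\theta(x,\mathrm{d}y)$, which is bounded and enjoys the regularity built into the notion of a (weakly-differentiable) Markov kernel, the weak-differentiability of $P_\theta$ gives $\epsilon^{-1}(P_{\theta+\epsilon} - P_\theta)(g) \to P'_\theta(g)$. For the first summand,
\begin{equation*}
\frac{1}{\epsilon}\bigl(P_{\theta+\epsilon}\times(Q_{\theta+\epsilon} - Q_\theta)\bigr)(h) = \int \frac{1}{\epsilon}\bigl(Q_{\theta+\epsilon}(x,h(x,\cdot)) - Q_\theta(x,h(x,\cdot))\bigr) P_{\theta+\epsilon}(\mathrm{d}x),
\end{equation*}
whose integrand converges pointwise in $x$ to $Q'_\theta(x,h(x,\cdot))$ by weak-differentiability of $Q_\theta(x,\cdot)$, and is uniformly bounded in $\epsilon$ by the bounded-weak-differentiability hypothesis. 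Combining this with the weak convergence $P_{\theta+\epsilon}\to P_\theta$, itself a direct consequence of weak-differentiability, yields the limit $(P_\theta\times Q'_\theta)(h)$.

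The main obstacle is exactly this first summand: the inner difference quotient is integrated against the $\epsilon$-perturbed outer measure $P_{\theta+\epsilon}$, so pointwise convergence of the integrand alone is not enough to swap limit and integral. The bounded-weak-differentiability hypothesis is precisely what supplies the uniform dominant needed to apply dominated convergence, and theorem~6.1 of \cite{Heidergott2010} packages this interchange-of-limits argument in the necessary measure-theoretic generality; invoking that theorem after matching its hypotheses to ours closes the proof.
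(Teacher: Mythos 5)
The paper gives no proof of this corollary at all: it simply declares the identity to be ``an easily-proved corollary'' of theorem~6.1 of Heidergott et al.\ (2010), so your decision to reduce the claim to that theorem is exactly the paper's intended route, and your telescoping decomposition is the standard calculation underlying it. Two points in your added detail deserve care, because they are where the cited theorem's extra hypotheses are actually doing work. First, for the second summand you need $g(x)=\int h(x,y)\,Q_{\theta}(x,\d y)$ to be \emph{continuous} in $x$ before weak differentiability of $P_{\theta}$ applies; nothing in the paper's definition of a (bounded weakly-differentiable) Markov kernel gives continuity of $x\mapsto Q_{\theta}(x,\cdot)$ --- only measurability --- so this regularity is an additional assumption imported from the Heidergott framework, not something ``built into'' the notion as you suggest. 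Second, the bounded-weak-differentiability condition as stated in the paper bounds $\sup_g|\int g\,\d Q'_{\theta}(x,\cdot)|$ at the single value $\theta$; by itself it does not dominate the difference quotient $\epsilon^{-1}(Q_{\theta+\epsilon}(x,h(x,\cdot))-Q_{\theta}(x,h(x,\cdot)))$ uniformly in $\epsilon$ and $x$, and moreover pointwise convergence of the integrand plus a uniform bound plus weak convergence $P_{\theta+\epsilon}\to P_{\theta}$ is still not sufficient to pass to the limit in $\int(\cdot)\,\d P_{\theta+\epsilon}$ (one needs something like continuous or locally uniform convergence). You correctly identify this as the main obstacle and defer it to theorem~6.1, which is legitimate and mirrors the paper; just be aware that ``matching its hypotheses to ours'' is not a formality --- it is precisely where these two gaps get closed.
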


\Cref{res:derivativeProduct} was used at several occasions in the examples of \cref{sec:practicalExample} for the special case where the kernel does not depend on $\theta$, that is $(P_{\theta} \times Q)' = P'_{\theta} \times Q$. In these examples, the key argument was the simplification of terms that appear both in the numerator and denominator of the score function, using the following \lcnamecref{res:jointRadonNikodym}.

\begin{lemma}
\label{res:jointRadonNikodym}
Let $\mu$ and $\tau$ be finite signed measures on $(E,\calB(E))$ such that $\mu \ll \tau$ and let $\nu$ and $\eta$ be signed kernels from $E$ to $F$ such that $\nu(x,\cdot) \ll \eta(x,\cdot)$ for all $x \in E$, then
\eqns{
\dfrac{\d \mu \times \nu}{\d \mu \times \eta}(x,y) = \dfrac{\d \nu(x,\cdot)}{\d \eta(x,\cdot)}(y), \qquad
\dfrac{\d \tau \times \eta}{\d \mu \times \eta}(x,y) = \dfrac{\d \tau}{\d \mu}(x)
}
for $(\mu\times\eta)$-almost every $(x,y) \in E\times F$.
\end{lemma}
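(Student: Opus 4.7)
The plan is to verify both identities by exhibiting a candidate function and checking that it satisfies the defining integral identity of the relevant Radon--Nikodym derivative on a generating $\pi$-system; standard extension then completes the argument. Since $\mu,\tau$ are finite signed measures and $\nu,\eta$ are signed kernels, I would first pass to Jordan decompositions, treat the positive case, and recombine by linearity. (As a side remark, the lemma as stated reads $\mu\ll\tau$, but for $\frac{\d\tau}{\d\mu}$ to make sense in the second identity one needs $\tau\ll\mu$, so this is presumably a typographical slip worth flagging.)

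For the first identity, set $h(x,y) \defeq \frac{\d\nu(x,\cdot)}{\d\eta(x,\cdot)}(y)$. The first task, and the main technical obstacle, is to exhibit a jointly measurable version of $h$: because $E$ and $F$ are Polish, $\calB(E)\otimes\calB(F)$ is the product Borel $\sigma$-algebra, and a jointly measurable density of a kernel with respect to a kernel is known to exist (one can construct it by a martingale approximation on a refining measurable partition of $F$, or by invoking standard parametric Radon--Nikodym / disintegration results). Once joint measurability is secured, Fubini's theorem gives, for any rectangle $A\times B\in\calB(E)\otimes\calB(F)$,
\eqns{
\int_{A\times B} h(x,y)\,(\mu\times\eta)(\d x,\d y) = \int_A \mu(\d x) \int_B h(x,y)\,\eta(x,\d y) = \int_A \nu(x,B)\,\mu(\d x) = (\mu\times\nu)(A\times B).
}
Extending this equality from the $\pi$-system of measurable rectangles to the full product $\sigma$-algebra by a monotone-class argument identifies $h$ as the desired density up to $(\mu\times\eta)$-null sets.

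The second identity reduces to the same scheme with $g(x) \defeq \frac{\d\tau}{\d\mu}(x)$, which is measurable in $x$ by the classical Radon--Nikodym theorem and trivially measurable in $(x,y)$. Applying Fubini to a rectangle $A\times B$ yields
\eqns{
\int_{A\times B} g(x)\,(\mu\times\eta)(\d x,\d y) = \int_A g(x)\,\eta(x,B)\,\mu(\d x) = \int_A \eta(x,B)\,\tau(\d x) = (\tau\times\eta)(A\times B),
}
and the same monotone-class extension closes the argument. As noted, the only step that is not routine is the joint measurability of $h$; in every instance where the lemma is applied in \cref{sec:practicalExample} the kernels $\nu,\eta$ are given in closed form (e.g.\ the permutation kernel and the thinning kernel), so joint measurability is transparent there and the abstract disintegration machinery is not strictly needed for the paper's uses.
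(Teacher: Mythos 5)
Your argument is correct in substance but runs in the opposite direction to the paper's, and the comparison is instructive. The paper starts from the object that is jointly measurable for free --- the Radon--Nikodym derivative $f$ of $\mu\times\nu$ with respect to $\mu\times\eta$ on the product space --- and then \emph{disintegrates} it: it shows that for each fixed $B$ the identity $\nu(x,B)=\int\ind{B}(y)f(x,y)\,\eta(x,\d y)$ holds for $\mu$-almost every $x$, and upgrades ``for each $B$, a.e.\ $x$'' to ``for a.e.\ $x$, all $B$'' by intersecting over a countable generating $\pi$-system of $\calB(F)$ (available because $F$ is Polish). You instead start from the sectionwise derivative $h(x,y)=\frac{\d\nu(x,\cdot)}{\d\eta(x,\cdot)}(y)$ and verify by Fubini that it integrates correctly over rectangles. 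The price of your direction is exactly the step you flag: the existence of a \emph{jointly measurable} version of $h$ is a genuine theorem (a Doob-type measurable-density result), not a routine remark, and your proof only gestures at it; the paper's ordering is designed precisely so that this issue never arises, at the cost of the null-set bookkeeping over the countable $\pi$-system. Your direction does have one small advantage: it establishes $\mu\times\nu\ll\mu\times\eta$ as a byproduct, whereas the paper implicitly assumes the product-space derivative exists. If you keep your route, you should either cite a specific measurable-selection/parametric Radon--Nikodym result or restrict to the explicit kernels used in the applications, as you suggest. Finally, your observation that the hypothesis should read $\tau\ll\mu$ rather than $\mu\ll\tau$ (so that $\frac{\d\tau}{\d\mu}$ and $\frac{\d\tau\times\eta}{\d\mu\times\eta}$ are defined) is correct and consistent with how the lemma is invoked, where $\tau=P'_{\theta}$ and $\mu=P_{\theta}$ with $P'_{\theta}\ll P_{\theta}$.
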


\begin{proof}
Denoting $f$ the Radon-Nikodym derivative of $\mu \times \nu$ by $\mu \times \eta$, it holds by definition that
\eqns{
\mu \times \nu(A \times B) = \int \ind{A\times B}(x,y) f(x,y) \mu \times \eta(\d(x,y))
}
for all $A \times B \in \calB(E)\otimes\calB(F)$, so that
\eqns{
\int \ind{A}(x) \nu(x, B) \mu(\d x) = \int \ind{A}(x) \int \ind{B}(y) f(x,y) \eta(x,\d y) \mu(\d x)
}
which implies that, for all $B \in \calB(F)$, it holds that
\eqnl{eq:proof:jointRadonNikodym}{
\nu(x, B) = \int \ind{B}(y) f(x,y) \eta(x,\d y)
}
for $\mu$-almost every $x \in E$. Since $F$ is a Polish space, there exists a countable collection $\calG$ of subsets of $F$ that is a $\pi$-system and that is generating $\calB(F)$. \Cref{eq:proof:jointRadonNikodym} implies that for all $B \in \calG$, there exists a subset $E_B$ of $E$ with full $\mu$-measure such that $\nu(x, B) = \int \ind{B}(y) f(x,y) \eta(x,\d y)$ is true for all $x \in E_B$. Considering the countable intersection $E_{\calG} = \bigcap_{B \in \calG} E_B$, it follows that the statement of interest is true for all $x \in E_{\calG}$ and all $B \in \calG$. To prove the equality of the measures defined on each side of \cref{eq:proof:jointRadonNikodym} it is sufficient to prove their equality on a $\pi$-system as demonstrated. We conclude that $f(x,\cdot)$ is also the Radon-Nikodym derivative of $\nu(x,\cdot)$ by $\eta(x,\cdot)$ for $\mu$-almost every $x$, which proves the first result. The second result can be proved in a similar but simpler way.
\end{proof}

Now assuming that the interest is in the marginal law $\hat{P}_{\theta}$ of $P_{\theta} \times Q_{\theta}$ on $(F,\calB(F))$, it is often easier to express $\hat{P}_{\theta}$ as
\eqns{
\hat{P}_{\theta}(B) = P_{\theta}Q_{\theta}(B) \defeq \int \ind{B}(y) Q_{\theta}(x, \d y) P_{\theta}(\d x),
}
for any $B \in \calB(F)$. In this case, the score can be computed as in the following \lcnamecref{res:FisherIdentity}.

\begin{proposition}[Fisher identity]
\label{res:FisherIdentity}
Let $\hat{P}_{\theta}$ be the law of a random variable $Y$ from $(\Omega,\Sigma,\bbP)$ to $(F,\calB(F))$ defined as the marginal of the law $P_{\theta} \times Q_{\theta}$ of $(X,Y)$ on $(E \times F, \calB(E) \otimes \calB(F))$, and let $\{P_{\theta}\}_{\theta\in\Theta}$ and $\{Q_{\theta}\}_{\theta\in\Theta}$ be respectively weakly-differentiable and bounded weakly-differentiable, then
\eqnl{eq:FisherIdentity}{
\dfrac{ \d \hat{P}'_{\theta}}{\d \hat{P}_{\theta} }(Y) = \bbE_{\theta}\bigg( \dfrac{ \d (P_{\theta} \times Q_{\theta})'}{\d P_{\theta} \times Q_{\theta} }(X,Y) \Given Y \bigg) \qquad \text{almost surely}
}
with $\bbE_{\theta}(\cdot \given Y)$ the conditional expectation for a given $\theta \in \Theta$.
\end{proposition}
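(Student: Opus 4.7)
The plan is to test the desired Radon--Nikodym derivative against an arbitrary bounded continuous function on $F$ and then extend from bounded continuous test functions to indicators of Borel sets by means of \cref{res:uniqueMeasure}, so that the defining property of the conditional expectation identifies the two sides. Concretely, I first note that the marginal $\hat{P}_{\theta}$ is the image of $P_{\theta} \times Q_{\theta}$ under the projection $\pi_F \colon (x,y) \mapsto y$, so for any bounded continuous $f$ on $F$,
\eqns{
\hat{P}_{\theta}(f) = (P_{\theta} \times Q_{\theta})(f \circ \pi_F).
}
Since $\{P_{\theta}\}$ is weakly differentiable and $\{Q_{\theta}\}$ is bounded weakly differentiable, \cref{res:derivativeProduct} ensures that $(P_{\theta} \times Q_{\theta})'$ exists as a signed finite measure. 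Differentiating the above identity in $\theta$ (noting $f \circ \pi_F$ is bounded continuous on $E \times F$) yields $\hat{P}'_{\theta}(f) = (P_{\theta} \times Q_{\theta})'(f \circ \pi_F)$, which by \cref{res:uniqueMeasure} extends to $\hat{P}'_{\theta}(B) = (P_{\theta} \times Q_{\theta})'(B \times E) $ (after swapping the roles of the factors) for all $B \in \calB(F)$, i.e.\ $\hat{P}'_{\theta}$ is the marginal of $(P_{\theta} \times Q_{\theta})'$ on $F$.

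Next, introduce the joint score $s_{\theta}(x,y) \defeq \frac{\d (P_{\theta} \times Q_{\theta})'}{\d (P_{\theta} \times Q_{\theta})}(x,y)$, whose existence is part of the standing hypothesis underlying the Fisher information of the joint model. Plugging the Radon--Nikodym representation into the marginal identity gives, for any $B \in \calB(F)$,
\eqns{
\hat{P}'_{\theta}(B) = \int \ind{B}(y)\, s_{\theta}(x,y)\, (P_{\theta} \times Q_{\theta})(\d(x,y)) = \int \ind{B}(y)\, \bbE_{\theta}\big(s_{\theta}(X,Y) \given Y = y\big)\, \hat{P}_{\theta}(\d y),
}
where the second equality uses the tower property and the fact that $\ind{B}(y)$ is $\sigma(Y)$-measurable. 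In particular, this shows $\hat{P}'_{\theta} \ll \hat{P}_{\theta}$, and by uniqueness of the Radon--Nikodym derivative (up to $\hat{P}_{\theta}$-null sets), the conditional expectation $\bbE_{\theta}(s_{\theta}(X,Y) \given Y)$ equals $\frac{\d \hat{P}'_{\theta}}{\d \hat{P}_{\theta}}(Y)$ almost surely, which is precisely \cref{eq:FisherIdentity}.

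The main obstacle I anticipate is the rigorous passage from the weak-differentiability characterisation, valid a priori only against bounded continuous test functions, to the equality of signed measures on all of $\calB(F)$. This is where the Polish assumption enters through \cref{res:uniqueMeasure}, and where the bounded weak differentiability of $\{Q_{\theta}\}$ is essential: it guarantees that $Q'_{\theta}$ is a signed kernel of uniformly bounded total variation, so that Fubini-type manipulations on $(P_{\theta} \times Q_{\theta})' = P'_{\theta} \times Q_{\theta} + P_{\theta} \times Q'_{\theta}$ are justified and the marginalization identity carries through. Once these measure-theoretic subtleties are in place, the conclusion is an almost mechanical combination of \cref{res:derivativeProduct}, \cref{res:jointRadonNikodym}, and the tower property.
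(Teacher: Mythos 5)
Your proof is correct and follows essentially the same route as the paper's: identify $\hat{P}'_{\theta}$ as the marginal of $(P_{\theta}\times Q_{\theta})'$, insert the joint score, and apply the tower property together with uniqueness of the Radon--Nikodym derivative. You are in fact slightly more careful than the paper in justifying the passage from bounded continuous test functions to all Borel sets via \cref{res:uniqueMeasure} (the paper simply asserts that $\hat{P}'_{\theta}(B) = (P_{\theta}\times Q_{\theta})'(E\times B)$); the only blemish is the harmless notational slip writing $B\times E$ where $E\times B$ is meant.
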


\begin{proof}
For any $\theta \in \Theta$, the marginal $\hat{P}_{\theta}$ is simply the probability measure $B \mapsto P_{\theta} \times Q_{\theta}(E \times B)$, so that the family $\{\hat{P}_{\theta}\}_{\theta \in \Theta}$ inherits weak-differentiability from $\{P_{\theta}\}_{\theta \in \Theta}$ and $\{Q_{\theta}\}_{\theta \in \Theta}$. The derivative $\hat{P}'_{\theta}$ can then be characterised for all $B \in\calB(E)$ by
\eqnsa{
\hat{P}'_{\theta}(B) & = (P_{\theta} \times Q_{\theta})'(E \times B) \\
& = \int \dfrac{ \d (P_{\theta} \times Q_{\theta})'}{\d P_{\theta} \times Q_{\theta} }(x,y) \ind{E\times B}(x,y) P_{\theta} \times Q_{\theta}(\d(x,y)) \\
& = \int \bbE_{\theta}\bigg( \dfrac{ \d (P_{\theta} \times Q_{\theta})'}{\d P_{\theta} \times Q_{\theta} }(X,Y) \Given Y = y \bigg) P_{\theta}Q_{\theta}(\d y)
}
Recalling that $\hat{P}_{\theta} = P_{\theta}Q_{\theta}$ concludes the proof of the \lcnamecref{res:FisherIdentity}.
\end{proof}

The Fisher identity is particularly important when the interest is in the Fisher information with respect to the successive observations of a state space model \citep{Douc2004, Dean2014}, in which case it is defined as the limit
\eqns{
\bscalI(\theta) = \lim_{n\to\infty} \dfrac{1}{n} \int \bigg(\dfrac{ \d \bar{P}'_{\theta}}{\d \bar{P}_{\theta} }(y_1,\dots,y_n)\bigg)^2 \bar{P}_{\theta}(\d(y_1,\dots,y_n)),
}
where $n$ refers to the time horizon and where $\bar{P}_{\theta}$ is the stationary distribution of the observation process.

The results of \cref{res:derivativeProduct,res:jointRadonNikodym} also lead to the following extension of a known property of Fisher information, involving the Fisher information $\bscalI_{Y|X}(\theta)$ of a random variable $Y$ calculated with respect to the conditional law of $Y$ given another random variable $X$, defined as
\eqns{
\bscalI_{Y|X}(\theta) = \int \bscalI_{Y}(\theta; x) P(\d x),
}
where $P$ is the law of $X$ and where
\eqns{
x \mapsto \bscalI_{Y}(\theta; x) = \int \bigg(\dfrac{ \d Q'_{\theta}(x,\cdot)}{\d Q_{\theta}(x,\cdot)}(y)\bigg)^2 Q_{\theta}(x, \d y)
}
is assumed to be a measurable mapping, with $Q_{\theta}$ a Markov kernel identified with the conditional law of $Y$ given $X$. Note that making the law of $X$ dependent on the parameter $\theta$ does not induce any difficulties.

\begin{proposition}
\label{res:additivityInformation}
Let $X$ and $Y$ be random variables on a common probability space $(\Omega,\Sigma,\bbP)$ whose laws are parametrised by $\theta \in \Theta$, let the family of laws of $X$ be weakly-differentiable, and let the family of laws of $Y$ given $X$ be bounded and weakly-differentiable, then the Fisher information $\bscalI_{X,Y}(\theta)$ corresponding to the law of $(X,Y)$ can be expressed as
\eqns{
\bscalI_{X,Y}(\theta) = \bscalI_{Y|X}(\theta) + \bscalI_X(\theta)
}
where $\bscalI_{Y|X}(\theta)$ and $\bscalI_X(\theta)$ correspond to the random variables $Y|X$ and $X$ respectively.
\end{proposition}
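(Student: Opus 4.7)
The plan is to reduce the proposition to a direct calculation built on the two earlier structural results, \cref{res:derivativeProduct,res:jointRadonNikodym}, followed by a standard orthogonality argument on the conditional score.

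First, I would apply \cref{res:derivativeProduct} to the joint law $P_{\theta}\times Q_{\theta}$ to obtain
\eqns{
(P_{\theta}\times Q_{\theta})' \;=\; P'_{\theta}\times Q_{\theta} \;+\; P_{\theta}\times Q'_{\theta}.
}
Using the absolute-continuity hypotheses implicit in the definition of $\bscalI_{X}(\theta)$ and $\bscalI_{Y\mid X}(\theta)$ (i.e.\ $P'_{\theta}\ll P_{\theta}$ and $Q'_{\theta}(x,\cdot)\ll Q_{\theta}(x,\cdot)$ for $P_{\theta}$-almost every $x$), \cref{res:jointRadonNikodym} applied to each of the two summands gives the additive decomposition of the joint score
\eqns{
\dfrac{\d(P_{\theta}\times Q_{\theta})'}{\d(P_{\theta}\times Q_{\theta})}(x,y)
\;=\;
\dfrac{\d P'_{\theta}}{\d P_{\theta}}(x) \;+\; \dfrac{\d Q'_{\theta}(x,\cdot)}{\d Q_{\theta}(x,\cdot)}(y),
}
which holds $(P_{\theta}\times Q_{\theta})$-almost everywhere.

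Next, I would square this expression and integrate against $P_{\theta}\times Q_{\theta}$, expanding the square into the two marginal squared-score terms and a cross term. The two squared terms integrate, respectively, to $\bscalI_{X}(\theta)$ and to $\int \bscalI_{Y}(\theta;x)\,P_{\theta}(\d x) = \bscalI_{Y\mid X}(\theta)$ by Fubini and the definition of $\bscalI_{Y\mid X}$. The main (and only substantive) step is to show the cross term vanishes. Conditioning on $x$ and invoking the inner integral,
\eqns{
\int \dfrac{\d Q'_{\theta}(x,\cdot)}{\d Q_{\theta}(x,\cdot)}(y)\, Q_{\theta}(x,\d y) \;=\; Q'_{\theta}(x,F),
}
which equals $0$ because $Q_{\theta}(x,F)=1$ for every $\theta$ in an open neighbourhood, so differentiation under \cref{def:derivMeasure} (applied with $f\equiv 1$, a bounded continuous function on the Polish space $F$) yields $Q'_{\theta}(x,F)=0$. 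Hence the cross term equals $\int \frac{\d P'_{\theta}}{\d P_{\theta}}(x)\cdot 0 \cdot P_{\theta}(\d x)=0$, and the desired identity follows.

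The only real obstacle is this orthogonality step: one has to be careful that the bounded-weak-differentiability assumption on $\{Q_{\theta}\}$ is actually what lets us assert $Q'_{\theta}(x,F)=0$ (rather than only $Q'_{\theta}(x,\cdot)$ integrating bounded continuous functions against $0$ in the limit), and that Fubini can be applied to the joint integral. Both are straightforward given the signed-kernel framework set up before \cref{res:derivativeProduct} and the fact that constants are bounded continuous on the Polish space $F$; no further routine calculation should be needed.
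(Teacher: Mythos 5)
Your proposal is correct and follows essentially the same route as the paper: apply \cref{res:derivativeProduct} and \cref{res:jointRadonNikodym} to write the joint score as the sum of the marginal score of $X$ and the conditional score of $Y$ given $X$, then square and integrate. The paper's own proof stops at the squared-sum display and leaves the cross-term cancellation implicit, whereas you correctly spell it out via $Q'_{\theta}(x,F)=0$ (differentiating $Q_{\theta}(x,F)\equiv 1$ with $f\equiv 1$), which is a welcome completion rather than a deviation.
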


\begin{proof}
Let $\{P_{\theta}\}_{\theta\in\Theta}$ be the (weakly-differentiable) parametric family of laws of $X$ and let $\{Q_{\theta}\}_{\theta \in \Theta}$ be the (bounded weakly-differentiable) parametric family of conditional laws of $Y$ given $X$, then
\eqns{
\bscalI_{X,Y}(\theta) = \int \bigg(\dfrac{ \d (P_{\theta} \times Q_{\theta})'}{\d P_{\theta} \times Q_{\theta}}(x,y)\bigg)^2 P_{\theta} \times Q_{\theta}(\d (x,y)).
}
Using \cref{res:derivativeProduct,res:jointRadonNikodym}, it follows that
\eqns{
\bscalI_{X,Y}(\theta) =  \int \bigg(\dfrac{ \d P'_{\theta} }{\d P_{\theta}}(x) + \dfrac{ \d Q'_{\theta}(x,\cdot)}{\d Q_{\theta}(x,\cdot)}(y)\bigg)^2 P_{\theta} \times Q_{\theta}(\d (x,y))
}
which concludes the proof of the \lcnamecref{res:additivityInformation}.
\end{proof}

A straightforward corollary of \cref{res:additivityInformation} can be stated as follows: if $X$ and $Y$ are independent random variables, then $\bscalI_{X,Y}(\theta) = \bscalI_X(\theta) + \bscalI_Y(\theta)$. Note that \cref{res:additivityInformation} could also be used to prove \cref{res:randomPerturbation}.

\subsection*{Acknowledgements}
S.S.\ Singh would like to thank Prof.\ Ioannis Kontoyiannis for helpful remarks. All authors were supported by Singapore Ministry of Education AcRF tier 1 grant R-155-000-182-114. AJ is affiliated with the Risk Management Institute, OR and analytics cluster and the Center for Quantitative Finance at NUS. 

\appendix
\section*{Appendix}
\subsection*{Proofs and technical details}

\begin{proof}[Proof of strict inequality in \cref{res:thinning}]
Jensen's inequality is strict unless it is applied to a non-strictly-convex function or to a degenerate random variable. In the context of \cref{res:thinning}, the involved function is $y \mapsto y^2$ so that we only have to verify that the random variable
\eqns{
S_{\theta}(\Phi) = \frac{\d P'_{\theta}}{\d P_{\theta}}(\Phi)
}
is not $\sigma(\Phi_{\alpha})$-measurable:
\begin{enumerate}[wide]
\item We can rule out $S_{\theta}(\Phi) = c$ (for some constant $c$) almost surely as follows: since $\bbE( S_{\theta}(\Phi)) =0$,
it follows that $c=0.$ But this violates the assumption that $\bscalI_{\Phi}(\theta) > 0$.
\item Since $S_{\theta}(\Phi)$ is not a constant almost surely, there exists a set $A \in \calB(\bbR)$ such that $1>\bbE\big(\mathbb{I}_{A}( S_{\theta}(\Phi)) \big) >0$.
Then 
\eqnla{eq:strict}{
\bbE\big( \bbI_{A}\left(S_{\theta}(\Phi)\right)\bbI_{\bbR^{0}}(\Phi_{\alpha})\big) & = \bbE\big( \bbI_{A}(S_{\theta}(\Phi)) \bbE(\bbI_{\bbR^{0}}(\Phi_{\alpha}) \given \Phi)\big) \\
& = \bbE\big( \bbI_{A}(S_{\theta}(\Phi))(1-\alpha)^{|\Phi|} \big) > 0
}
since $(1-\alpha)^{|\Phi|}>0$ almost surely where $|\Phi|$ denotes the number of points in $\Phi$ and where $\bbI_{\bbR^{0}}(\Phi_{\alpha})$ is the indicator of the event $|\Phi_{\alpha}| = 0$. We can similarly show that \cref{eq:strict} holds with $A$ replaced with $A^{c}$. Thus 
\begin{align*}
\bbE\big( \bbI_{\bbR^{0}}(\Phi_{\alpha})\big)  & >\bbE\big( \bbI_{A}(S_{\theta}(\Phi))\bbI_{\bbR^{0}}(\Phi_{\alpha})\big) > 0
\end{align*}
which violates the following fact: Let $X$ and $Y$ be integrable random variables, assume $Y=c$ is an atom of $\sigma(Y)$ and $Y=c$ has positive probability. If $X$ is $\sigma(Y)$ measurable then $\bbE( \bbI_{A}(X)\bbI_{\{ c\} }(Y) )$ is either $0$ or equal to $\bbE( \bbI_{\{c\}}(Y) )$.
\end{enumerate}
\end{proof}

\begin{lemma}
\label{res:uniqueMeasure}
If $\mu$ be a finite signed measure on a metric space $E$ characterised by the value of $\mu(f)$ for all bounded continuous mappings $f$ on $E$. Then $\mu$ is uniquely defined on $\calB(E)$.
\end{lemma}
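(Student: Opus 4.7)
The plan is to reduce the uniqueness claim to the following null statement: any finite signed measure $\mu$ on $(E,\calB(E))$ with $\mu(f) = 0$ for every bounded continuous $f \colon E \to \bbR$ is the zero measure. Applying this to the difference of two candidate measures then yields the lemma. To prove the null statement, I would follow the standard $\pi$-$\lambda$ template, using the closed subsets of $E$ as the intermediate $\pi$-system bridging continuous test functions and arbitrary Borel sets.

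The first step is to show $\mu(F) = 0$ for every closed $F \subseteq E$. Define the bounded Lipschitz approximants
\eqns{
f_n(x) = \max\!\bigl(0,\, 1 - n\, d(x,F)\bigr), \qquad n \in \bbN,
}
where $d(x,F) = \inf_{y \in F} d(x,y)$. Each $f_n$ takes values in $[0,1]$, equals $1$ on $F$, and decreases pointwise to $\ind{F}$ because $F$ is closed. Decomposing $\mu = \mu^+ - \mu^-$ via Hahn-Jordan gives two finite positive measures, and applying dominated convergence to each (with dominating function $1$, integrable because the measures are finite) yields $\mu^{\pm}(f_n) \to \mu^{\pm}(F)$, and hence $\mu(f_n) \to \mu(F)$. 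Since $\mu(f_n) = 0$ for every $n$, we conclude $\mu(F) = 0$.

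The second step extends this from closed sets to all of $\calB(E)$. Set $\mathcal{D} = \{B \in \calB(E) : \mu(B) = 0\}$. Then $E \in \mathcal{D}$ because $\one \in C_b(E)$; finite additivity of $\mu$ gives $B \setminus A \in \mathcal{D}$ whenever $A \subseteq B$ both lie in $\mathcal{D}$; and continuity from below of $\mu$ (obtained by applying continuity from below to $\mu^{\pm}$ separately and subtracting the limits) gives $\bigcup_n A_n \in \mathcal{D}$ for every increasing sequence $A_n \in \mathcal{D}$. Thus $\mathcal{D}$ is a $\lambda$-system containing the $\pi$-system of closed subsets of $E$, which generates $\calB(E)$. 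Dynkin's $\pi$-$\lambda$ theorem therefore gives $\mathcal{D} = \calB(E)$, finishing the proof.

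There is no substantive obstacle: the only mildly nontrivial ingredient is the first step, which rests on the metric-space distance-to-closed-set Lipschitz trick together with the fact that dominated and monotone convergence remain valid for finite signed measures via Hahn-Jordan; the remainder is a routine application of Dynkin's theorem, with the signed-measure aspect causing no trouble because of total finiteness.
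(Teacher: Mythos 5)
Your proof is correct and follows essentially the same route as the paper's: approximate the indicator of a closed set by the Lipschitz functions built from the distance-to-set map, then upgrade from closed sets to all of $\calB(E)$ via the $\pi$-$\lambda$ theorem. The only cosmetic differences are that you work with the difference of the two candidate measures rather than comparing them directly, and you justify the limit $\mu(f_n)\to\mu(F)$ by Hahn--Jordan plus dominated convergence where the paper uses the squeeze $\ind{C}\leq f_{\epsilon}\leq\ind{C^{\epsilon}}$; both are valid.
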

\begin{proof}
Let $\tau$ be another finite signed measure that is characterised by $\tau(f) = \mu(f)$ for all bounded continuous functions $f$ on $E$. We first prove that $\mu$ and $\tau$ agree on the closed subsets of $E$. Let $\rho(x,y)$ be the metric on $E$ and let $\rho(x,C)$ denote the usual distance between a point $x$ and set $C$. Let  $f_{\epsilon}$ be the continuous function $f_{\epsilon}(x) = (1-\rho(x,C)/\epsilon)^+$ for some some closed set $C$ and some $\epsilon > 0$ where $g^+$ denotes the positive part of a function $g$. Note that $f_{\epsilon}(x)$ is a continuous function that approximates $\ind{C}(x)$ and
\eqns{
\ind{C}(x) \leq f_{\epsilon}(x) \leq \ind{C^{\epsilon}}(x)
}
with $C^{\epsilon}$ the $\epsilon$-neighbourhood of $C$, so that $\eta(f_{\epsilon})$ tends to $\eta(C)$ when $\epsilon \to 0$ for any finite signed measure $\eta$. It follows from that relation $\tau(f_{\epsilon}) = \mu(f_{\epsilon})$ that $\tau(C) = \mu(C)$. This result can be extended to $\mu = \tau$ as follows. Noticing that the set $\calG = \{B \in \calB(E): \mu(B) = \tau(B)\}$ is a $\lambda$-system that contains the closed sets and that the set of closed sets are themselves a $\pi$-system (which generates $\calB(E)$), we conclude by the $\pi$-$\lambda$ theorem that $\calB(E)$ is contained in $\calG$. Thus $\calB(E) = \calG$ and therefore $\mu(B) = \tau(B)$ for all $B \in \calB(E)$.
\end{proof}

\bibliographystyle{agsm}
\bibliography{Fisher}

\end{document}